\documentclass[11pt]{amsart}
    \usepackage[a4paper,margin=25mm]{geometry}
    \usepackage{amsmath,amssymb,amsthm}
    \usepackage{parskip}
    \usepackage{needspace}
    \usepackage{booktabs}
    \newtheorem{theorem}{Theorem}
    \newtheorem{lemma}{Lemma}
    \newtheorem{proposition}[lemma]{Proposition}
    \usepackage[T1]{fontenc}
    \usepackage{lmodern}
    \usepackage{microtype}
    \usepackage[hidelinks]{hyperref}
    \title{Amicable numbers} 
    \author{Paul Pollack}
    \address{Department of Mathematics, University of Georgia, Athens, GA 30602, USA}
    \subjclass[2020]{Primary 11A25; Secondary 11N37}
    \keywords{Amicable numbers, sum-of-divisors function, aliquot sums}
    
\begin{document}
    \begin{abstract} Let $s(n):=\sum_{d\mid n,\,d < n} d$ denote the sum of the proper divisors of $n$. Distinct positive integers $n,m$ form an \textsf{amicable pair} if  $s(n)=m$ and $s(m)=n$. 
Let $A(x)$ count the positive integers not exceeding $x$
that belong to an amicable pair.
We report on a proof, found by ChatGPT Astra, that
$$ A(x)\le x\exp\{-(1/2+o(1))\log x\,\log_3 x/\log_2 x\}$$
as $x\to\infty$. Here $\log_k$ denotes the $k$th iterate of the natural logarithm.
\end{abstract}
    \maketitle
    
    \section{Introduction}\label{sec:introduction}

Let $\sigma(n):=\sum_{d\mid n}d$ denote the sum of the positive divisors of $n$ and write $s(n):=\sum_{d\mid n,\,d<n}d$ for the sum of the proper divisors of $n$, so that $s(n)=\sigma(n)-n$. There are many problems about $s(n)$ which go back to ancient times and about which we still know embarrassingly little. Perhaps the most famous of these concern \textsf{perfect numbers}, positive integers $n$ for which $s(n)=n$. In this article we are concerned with a close conceptual cousin of the perfect numbers: amicable pairs. These are pairs of distinct positive integers \(n,n'\) satisfying \(s(n)=n'\) and \(s(n')=n\). We say $n$ is an \textsf{amicable number} if $n$ is a member of an amicable pair. 

The smallest amicable pair $220, 284$ appears in the writings of Iamblichus (ca.~300 CE), who discusses its significance to the Pythagorean school \cite[pp.~34--35]{Iamblichus1894}:\footnote{Translation by ChatGPT from the Greek text.}
\begin{quote}
    For they call certain other numbers, quite explicitly, friends, in assigning virtues and noble dispositions to numbers --- for example, 284 and 220. For the parts of each of these are productive of the other, in accordance with the principle of friendship, as Pythagoras declared.

For when someone asked him, “What is a friend?”, he replied, “Another I”; and this is demonstrated in the case of these numbers.
\end{quote}

Although amicable numbers have been of interest for millennia, the study of their distribution began much more recently, in the 1950s. Let $A(x)$ denote the number of amicable numbers not exceeding $x$. While over a billion amicable pairs are known \cite{ChernykhDatabase}, we do not know how to prove that there are infinitely many. That is, we cannot prove that $A(x)\to\infty$ as $x\to\infty$. We have had much more luck with upper bounds for $A(x)$. Table~\ref{tab:amicable-bounds} records our progress in this direction. Here and below, $\log_{k} x$ denotes the $k$th iterate of the natural logarithm.

\begin{table}[!ht]
\centering
\caption{Successive upper bounds for $A(x)$.}
\label{tab:amicable-bounds}
\small
\renewcommand{\arraystretch}{1.65}
\setlength{\tabcolsep}{4pt}
\begin{tabular*}{\textwidth}{@{\extracolsep{\fill}}llll@{}}
\toprule
Result & Author(s) & Year & Reference\\
\midrule
$\limsup_{x\to\infty} A(x)/x<0.204$
 & Kanold & 1954 & \cite{Kanold1954}\\
$A(x)=o(x)$
 & Erd\H{o}s & 1955 & \cite{Erdos1955}\\
$A(x)\ll x\log_5 x/\sqrt{\log_4 x}$
 & Rieger & 1973 & \cite{Rieger1973}\\
$A(x)\ll x/\log_3 x$
 & Erd\H{o}s--Rieger & 1975 & \cite{ErdosRieger1975}\\
$A(x)\ll x\exp\{-c\sqrt{\log_3 x\,\log_4 x}\}$
 & Pomerance & 1977 & \cite{Pomerance1977}\\
$A(x)\ll x\exp\{-c(\log x\,\log_2 x)^{1/3}\}$
 & Pomerance & 1981 & \cite{Pomerance1981}\\
$A(x)\le x\exp\{-(\tfrac12+o(1))\sqrt{\log x\,\log_3 x}\}$
 & Pomerance & 2015 & \cite{Pomerance2015}\\
\bottomrule
\end{tabular*}

\smallskip
\begin{minipage}{\textwidth}
\footnotesize
Here $c>0$ is an absolute constant, which may differ across rows.
\end{minipage}
\end{table}

In this paper, we establish the following new upper bound, whose proof was discovered by ChatGPT Astra. In what follows,
\[ L(x) := \exp\left(\log{x} \frac{\log_3{x}}{\log_2{x}}\right).\]

    \begin{theorem}\label{thm:main}
    As $x\to\infty$, we have
    \[
     A(x)\le x L(x)^{-\frac{1}{2}+o(1)}.
    \]
    \end{theorem}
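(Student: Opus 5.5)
We follow the architecture of Pomerance's 2015 argument and change the point where his loss of $\sqrt{\log x\,\log_3 x}$ arises. Write $L=L(x)$ and fix $\epsilon>0$. For an amicable pair $n,m=s(n)$ with $n\le x$, the aim is to show that, after discarding exceptional sets of size $\le xL^{-1/2+\epsilon}$, the remaining $n$ are determined by a small amount of data. Throughout we use $\sigma(n)=\sigma(m)=n+m$.

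\textbf{Step 1 (removing exceptional $n$).} We discard four families of $n$:
\begin{itemize}
\item $n\le x/L$;
\item $n$ with $P(n)\le y$, where $P$ is the largest prime factor and $y=L^{1/2}$ up to small factors, i.e. $\log y\approx\tfrac12\log x\,\log_3x/\log_2x$;
\item $n$ divisible by $p^2$ with $p>y$;
\item the same three conditions imposed on $m$.
\end{itemize}
We control $m$ by noting that $m=s(n)$. It is standard, via Pomerance's technique, that $s$ maps sets of density at most $\delta$ (outside a negligible set) to preimages of comparable size, at the level of savings $L^{1/2}$.

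For the smooth numbers, de Bruijn's estimate with $u=\log x/\log y\approx 2\log_2x/\log_3x$ gives
\[
\Psi(x,y)=x\,u^{-(1+o(1))u}=x\exp\{-(2+o(1))\log_2 x\}.
\]
This is far too weak, so the choice of $y$ must be different: smoothness savings alone cannot reach $L^{1/2}$. Instead, the key mechanism is this. Write $n=pk$ and $m=qj$ with $p=P(n)$ and $q=P(m)$ both large. Then
\[
\sigma(n)=(p+1)\sigma(k)=\sigma(m)=(q+1)\sigma(j),
\]
and $s(n)=m$ gives $p\,\sigma(k)+\sigma(k)-pk=qj$, a linear relation in $p$.

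\textbf{Step 2 (counting via divisibility of $\sigma$).} A typical $n$ has $\sigma(n)$ divisible by every prime power up to roughly $\log_2 x$-sized bounds. More to the point, $\sigma(m)=\sigma(n)$ is divisible by a large power of primes up to $z$. Pomerance's key lemma states that for $d$ in a suitable range, the number of $n\le x$ with $d\nmid\sigma(n)$ is small. Dually, the number of $m\le x$ whose $\sigma(m)$ contains a prescribed large smooth divisor is small.

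The new input is to take the modulus $M=\mathrm{lcm}[1,\dots,z]$-type divisor with $z$ chosen so that $\log M\approx\tfrac12\log x\,\log_3x/\log_2x$. We then count $m=s(n)$ by fixing $k$, so that $n=pk$ and $m$ is a linear function of $p$:
\[
m=p(\sigma(k)-k)+\sigma(k).
\]
We need $q\mid m$ with $q+1$ related to $(p+1)\sigma(k)/\sigma(j)$. Fixing $j$ as well, the two linear constraints in $p$ and $q$ force $p$ into residue classes modulo divisors of order $M$. A Brun–Titchmarsh / large sieve bound then gives a saving of $M^{1-o(1)}$ against a loss from summing over $(k,j)$, and that loss is controlled by the count of $\sigma$-values, namely $x/L^{1/2+o(1)}$-type bounds.

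\textbf{Main obstacle.} The hard step is balancing these two quantities to reach exactly the exponent $1/2$:
\begin{itemize}
\item the number of pairs $(k,j)$ with $k,j\le x/y$ whose $\sigma$-values share the required structure;
\item the saving from the congruence constraints.
\end{itemize}
The natural source of $\log x\,\log_3x/\log_2x$ is the count of integers up to $x$ with at most $\log_2x/\log_3x$-type multiplicative constraints, or equivalently the Erd\H{o}s–Pomerance bound for the number of $n$ with $\sigma(n)$ not divisible by all small primes. I would first try to prove a lemma of the following form: the number of $n\le x$ for which $\sigma(n)$ lacks a divisor $d\le L^{1/2}$ of the form needed is $\le xL^{-1/2+o(1)}$, by optimizing Rankin's trick with parameter $1-\log_3x/\log_2 x$. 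I would then choose parameters symmetrically in $n$ and $m$ so that both sides contribute $L^{1/2}$.
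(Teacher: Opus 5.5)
Your outline is not yet a proof. It has a genuine gap: it offers no mechanism for $n$ without a large prime factor, and several of the inputs it relies on fail at the required strength.

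\textbf{The proposed inputs.} Step~1 is abandoned by your own smooth-number computation. In Step~2, no modulus $M$ with $\log M\approx\frac12\log x\,\log_3x/\log_2x$ can be assumed to divide $\sigma(n)$ outside $xL(x)^{-1/2}$ exceptions. Not even $M=3$ can: every squarefree $n$ composed of primes $\equiv1\pmod 3$ has $3\nmid\sigma(n)$, and there are $\gg x/\sqrt{\log x}$ such $n\le x$. The ``preimage principle'' for $s$ at savings $L^{1/2}$ is not a standard result. It is also unnecessary. Each amicable number determines its pair, and both members lie below $X=x\log x$. So conditions on the partner can be imposed by counting the amicable numbers up to $X$ that violate them, which is what the paper does. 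Finally, fixing both cofactors $k$ and $j$ gives no congruence saving. Eliminating $p$ from the two linear relations determines $q$, hence the pair, outright. You are then just counting pairs $(k,j)$, which is affordable only when $kj\le x/L^{1/2+o(1)}$. The paper uses exactly this elimination, but only to discard that range.

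\textbf{The missing case.} The essential gap concerns the $n$ that your linear relation in $p=P(n)$ cannot handle: those without a large prime factor. For fixed $c>0$, the $n\le x$ with $P(n)\le x^c$ have positive density. The largest smoothness level that can be discarded at cost $xL^{-1/2}$ is only $\exp(O((\log_2x)^2/\log_3x))$.

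In the paper the large-prime case ($P^+(a)>x^{\varepsilon^2}$) is the easy one and saves a power of $x$. It uses $p+1\mid\sigma(n')$, a large $d\mid p+1$ that divides $\sigma$ of the cofactor of a suitable $R\mid a'$, and the fact that at most two $R$ are admissible once $p$ and the cofactor are fixed. The factor $L^{-1/2}$ comes from the smooth case, through four ingredients absent from your sketch.

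(i) A structural reduction. It uses the bound $\#\{n\le X:\gcd(n,\sigma(n))>G\}\le X/G^{\rho}$, noting $\gcd(n,n')=\gcd(n,\sigma(n))$. This produces squarefree $a\mid n$ and $a'\mid n'$ with $\gcd(a,n')=\gcd(a',n)=1$, and that coprimality drives every uniqueness step.

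(ii) A Rankin-trick lemma: all but $YL^{-\kappa+o(1)}$ squarefree $b\le Y$ satisfy $\log\sigma(b)_{\le y}+(\log_2x)\,\Omega(\sigma(b)_{>y})\le\kappa\log x$, where $y=\log x$. This is where your exponent shift $\approx\log_3x/\log_2x$ belongs. With $\kappa=\frac12-4\varepsilon$, it is what limits the final exponent.

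(iii) Since $\sigma(b)\mid\sigma(n)=\sigma(n')$ for the unitary divisor $b=a/R$ of $n$, one factors $a'$ into blocks of size about $x^{2\varepsilon}$ and averages. This yields a block $V$ and $d\mid\gcd(\sigma(V),\sigma(b))$ with $(V/d)(\log x)^{\Omega(d)}\le x^{\varepsilon-4\varepsilon^3}$. Such $V$ are counted by Brun--Titchmarsh: the sum of $1/V$ over squarefree $V\le X$ with $d\mid\sigma(V)$ is at most $d^{-1}\exp\{\Omega(d)\log_2x+O(\Omega(d)+\log_2x)\}$.

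(iv) Rigidity. Write $n=cR$ with $x^{\varepsilon}\le R\le x^{\varepsilon+\varepsilon^2}$. The condition $V\mid s(n)$ becomes $\sigma(R)/R\equiv c/\sigma(c)\pmod V$. Since $V>x^{2\varepsilon+3\varepsilon^2}$ exceeds $|\sigma(R_1)R_2-\sigma(R_2)R_1|$ for any two candidates, $c$ and $V$ determine $R$.

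The count is then $Xx^{-\varepsilon}\cdot x^{o(1)}\cdot x^{\varepsilon-4\varepsilon^3}$. So the constant $\frac12$ is the ratio $\log R/\log V$ forced by (iv), not the outcome of a symmetric balancing between $n$ and its partner.
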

We expect that this is the first upper bound on $A(x)$ of the
``correct shape.'' Let $N_{\phi}(n)$ and $N_{\sigma}(n)$ denote
the numbers of solutions $m$ to the equations $\phi(m)=n$ and
$\sigma(m)=n$, respectively. Under plausible (still unproved)
hypotheses on the distribution of smooth shifted primes $p-1$,
Pomerance \cite{Pomerance1980}---developing ideas of Erd\H{o}s
\cite{Erdos1935}---showed that there are numbers $m\le x$ for which
$N_{\phi}(m)\ge x/L(x)^{1+o(1)}$ as $x\to\infty$. An analogous
argument, using smooth shifted primes $p+1$, suggests that there
are $m\le x$ for which $N_{\sigma}(m)\ge x/L(x)^{1+o(1)}$.
For such an $m$, there are at least $x^2/L(x)^{2+o(1)}$ pairs
$n<n'$ with $\sigma(n)=\sigma(n')=m$. Each sum $n+n'$ lies in
$[2,2m]\subset[2,2x]$. If the particular sum $m$ occurs with
roughly the average frequency, this suggests at least
$x/L(x)^{2+o(1)}$ pairs with $n+n'=m$. Each such pair is
amicable. This heuristic, which in less precise form goes back to Erd\H{o}s, suggests that Theorem~\ref{thm:main} has the correct shape and that the constant \(1/2\) cannot be replaced by anything larger than \(2\).

It is plausible that $2$ is the correct exponent, i.e., that
$A(x)=x/L(x)^{2+o(1)}$. It is proved in
\cite[Corollary 2]{Pollack2019} that
$\sum_{m\le x}N_\phi(m)^2\le x^2/L(x)^{2+o(1)}$.
Using \cite[Corollary 1.2]{Pollack2021}, together with a
decomposition into squarefull and squarefree parts, one can show
the analogous bound
$\sum_{m\le x}N_\sigma(m)^2\le x^2/L(x)^{2+o(1)}$.
Now put $X:=x\log x$. If $n\le x$ belongs to an amicable pair
$n,n'$, then their common divisor sum satisfies
$m:=\sigma(n)=\sigma(n')\le X$ for sufficiently large $x$.
(Here we use the familiar bound
$\sigma(v)\ll v\log_2(3v)$ for $v\ge1$
\cite[Theorem 323, p.~350]{HardyWright2008}.)
The second-moment bound for $N_\sigma$ shows that there are
at most $X^2/L(X)^{2+o(1)}=x^2/L(x)^{2+o(1)}$ pairs
$n<n'$ with $\sigma(n)=\sigma(n')=m \le X$. If the additional condition \(n+n'= m\) holds for a proportion \(\approx x^{-1}\) of these pairs, as the preceding heuristic suggests, this would give \(A(x)\le x/L(x)^{2+o(1)}\).

\subsection*{Definitions and notation} The letters $p,q,\ell$, with primes or subscripts attached, always denote prime numbers. We write $\Omega(n)$ for the number of prime factors of $n$, counted with multiplicity. We let $P^{+}(n)$ denote the largest prime factor of $n$, setting $P^{+}(1):=1$. For $n\ge1$ and $y\ge2$, we let $n_{\le y}$ and $n_{>y}$ denote the \textsf{$y$-smooth} and \textsf{$y$-rough} parts of $n$, respectively. That is,
\[ n_{\le y}:=\prod_{\substack{q^e\parallel n\\q\le y}}q^e, \qquad
 n_{>y}:=\prod_{\substack{q^e\parallel n\\q>y}}q^e,
\]
where empty products are taken to be $1$. Thus $n_{>y}=n/n_{\le y}$. We say that $d$ is a \textsf{unitary divisor} of $n$ if $d\mid n$ and $(d,n/d)=1$.

\subsection*{Statement of AI usage} The proof of Theorem~\ref{thm:main} was produced by ChatGPT Astra.
At this particular moment in time, LLMs such as Astra are
brilliant at solving problems, but their explanations are liable to
cause acute intellectual indigestion. P.P., in extended conversation with
Astra, cleaned up the argument in the hope of making it
palatable to carbon-based lifeforms. P.P. takes full
responsibility for the mathematical content without claiming
credit for discovering the proof.

ChatGPT Astra also produced a Lean formalization of
Theorem~\ref{thm:main}. The formal proof has been checked by
Lean's kernel; the only axioms on which it depends are
\texttt{propext}, \texttt{Classical.choice}, and
\texttt{Quot.sound}, the standard foundational axioms used by
Lean and Mathlib. See
\url{https://github.com/twonth/amicable-numbers-lean}.
    \section{Proof of Theorem \ref{thm:main}}\label{sec:main-proof}
    
    We first isolate two lemmas needed for the proof of Theorem \ref{thm:main}. 
    The first of these, Lemma \ref{lem:pollack}, appears as \cite[Theorem 1.1]{Pollack2011}. 
    
    \begin{lemma}\label{lem:pollack}
    Fix $\beta>0$. There is a constant $\rho=\rho(\beta)>0$ for which the following holds. For all $X>X_0(\beta)$ and all $G>\exp((\log\log X)^\beta)$,
    \[
     \#\{n\le X:\gcd(n,\sigma(n))>G\}\le X/G^\rho.
    \]
    \end{lemma}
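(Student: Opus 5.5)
The plan is to split according to the shape of $n$ and to reduce the main case to counting $n$ that are divisible both by a large squarefree $d$ and, for each prime $q\mid d$, by some prime $p\equiv -1\pmod q$. Write $D=\gcd(n,\sigma(n))>G$. Fix small parameters $\delta=\delta(\beta)>0$ and $y:=(\log_2 X)^{C}$ with $C=C(\beta)$ to be chosen. We discard two exceptional sets.

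\emph{(i) Large squarefull part.} Discard $n$ whose squarefull part exceeds $G^{1/4}$. Since squarefull numbers up to $t$ number $\ll t^{1/2}$, these $n$ number
\[
\le X\sum_{m>G^{1/4},\ m\ \mathrm{squarefull}}\frac1m\ll X/G^{1/8}.
\]

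\emph{(ii) Large smooth part.} Discard $n$ with $n_{\le y}>G^{1/4}$. Rankin's trick gives the count
\[
\le X\,G^{-\delta/4}\prod_{p\le y}\bigl(1-p^{\delta-1}\bigr)^{-1}\le X\,G^{-\delta/4}\exp\bigl(O(y^{\delta}/\log y)\bigr).
\]
Choose $\delta<\beta/(2C)$. Then $y^\delta\le(\log_2X)^{\beta/2}=o(\log G)$, so this count is $\le X/G^{\delta/5}$.

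For the remaining $n$, write $n=ab$ with $a$ squarefull, $b$ squarefree, $(a,b)=1$, so that $\sigma(n)=\sigma(a)\prod_{p\mid b}(p+1)$. Remove from $D$ its $y$-smooth part (at most $n_{\le y}\le G^{1/4}$), its common part with $a$ (at most $G^{1/4}$), and its common part with $\sigma(a)\le 2a\le 2G^{1/4}$. What remains is a divisor $d\mid D$ that satisfies:
\begin{itemize}
\item $d$ is squarefree and divides $b$;
\item all primes of $d$ exceed $y$;
\item every prime $q\mid d$ divides $p+1$ for some prime $p\mid b$;
\item $d>G/(2G^{3/4})\ge G^{1/5}$.
\end{itemize}
For distinct $q$ we may take distinct $p$: each $p\le X$ has $p+1$ with at most $O(\log X/\log y)$ prime factors exceeding $y$, and a short greedy/union-bound argument absorbs this harmlessly. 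Alternatively, pass to a subproduct $d'$ of $d$ with $d'\ge d^{1/2}$ for which the assignment $q\mapsto p_q$ is injective, which I expect to be routine.

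\emph{Main count.} The number of $n\le X$ divisible by $d\prod_{q\mid d}p_q$ for some choice $p_q\equiv-1\pmod q$, $p_q\le X$, is at most
\[
\frac{X}{d}\prod_{q\mid d}\ \sum_{\substack{p\le X\\ p\equiv -1\ (q)}}\frac1p\ \le\ \frac Xd\prod_{q\mid d}\frac{c\log_2 X}{q},
\]
by Brun--Titchmarsh and partial summation. The sum over $p$ is $\ll (\log_2X)/q$ uniformly for $q\le X$, counting $p=2q-1,\dots$ trivially for small $p$. Taking $C\ge 2$, each factor satisfies $c\log_2X/q\le q^{-1/2}$ once $q>y$, so the count is $\le X d^{-3/2}$. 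Summing over squarefree $d>G^{1/5}$ gives $\ll X\,G^{-1/10}$. Collecting the three cases yields the lemma with $\rho=\min(\delta/5,1/10)/2$, say.

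\emph{Main obstacle.} I expect the hard step to be the bookkeeping that turns ``$q\mid\sigma(n)$'' into ``$q\mid p+1$ with $p\,\|\,n$'' in a form where the product over $q$ genuinely factorizes. Two points need care: the injectivity of $q\mapsto p_q$, and the interplay with the squarefull part. The choice of $y$ as a power of $\log_2X$ is exactly what makes the losses $(\log_2X)^{\omega(d)}$ affordable while keeping the Rankin loss from the smooth part $o(\log G)$. This balance is where the hypothesis $G>\exp((\log_2X)^\beta)$ is used.
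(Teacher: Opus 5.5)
The paper does not prove this lemma; it quotes it from \cite{Pollack2011}, so there is no internal proof to compare with. Judged on its own, your steps (i) and (ii) and your extraction of a squarefree, $y$-rough $d\mid b$ with $d\mid\sigma(b)$ and $d\ge G^{1/5}$ are fine. The one slip there is that $\sigma(a)\le 2a$ is false (e.g.\ $\sigma(36)=91$); use $\sigma(a)\ll a\log_2(3a)$ instead.

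\textbf{The gap.} It sits exactly at the step you call routine. Your bound $\frac Xd\prod_{q\mid d}\frac{c\log_2X}{q}$ counts $n$ divisible by $d\prod_q p_q$, with the $p_q$ distinct and coprime to $d$. Neither property is available.
\begin{itemize}
\item \emph{Injectivity.} Both proposed fixes fail. Suppose $d\mid p+1$ for a single prime $p\mid b$, with no other prime $p'\mid b$ having $q\mid p'+1$; nothing excludes this. Then every subproduct of $d$ on which $q\mapsto p_q$ is injective is a single prime, so ``$d'\ge d^{1/2}$'' is false. Likewise, the bound $O(\log X/\log y)$ on how many $q$ share one $p$ is far too weak to absorb: discarding all but one prime per $p$ can leave you with one prime of $d$.
\item \emph{Coprimality.} $p_q$ is merely some prime of $b$, and $d\mid b$, so $p_q$ can itself be a prime of $d$ (take $q,q'\mid d$ with $q\mid q'+1$). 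Then $n$ is divisible only by $\mathrm{lcm}(d,\prod_q p_q)$, and the factor $1/p_q$ is counted twice.
\end{itemize}

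\textbf{The repair.} Both defects can be fixed, but by grouping primes rather than discarding them.
\begin{itemize}
\item For the overlap, use $p_q\ge 2q-1>q$. The edges $\{q,p_q\}$ with both ends dividing $d$ form a forest, since each vertex has out-degree at most $1$ and edges increase. A $2$-colouring therefore gives a subproduct $d'\ge d^{1/2}\ge G^{1/10}$ that contains no $p_q$ with $q\mid d'$.
\item For non-injectivity, group the primes of $d'$ by their $p_q$. That is, write $d'=f_1\cdots f_h$ with $f_i\mid\ell_i+1$ for distinct primes $\ell_i\nmid d'$. This is the multiplicative-partition device from Case~II of the proof of Theorem~\ref{thm:main}. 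Each group then costs $c\log_2X/\phi(f_i)$ by \eqref{eq:prime-reciprocals}.
\item Do not bound $d'$ by $d'$: the number of groupings of one $d'$ is a Bell number in $\omega(d')$ and can exceed $d'$. Instead, sum jointly over unordered collections of pairwise coprime, squarefree, $y$-rough $f_i>1$, using Rankin's trick:
\[
 \sum_{\substack{\{f_1,\dots,f_h\}\\ f_1\cdots f_h\ge G^{1/10}}}\ \prod_{i=1}^{h}\frac{c\log_2 X}{f_i\,\phi(f_i)}
 \le G^{-1/20}\exp\Bigl(c\log_2 X\sum_{f>1}\frac{f^{1/2}}{f\,\phi(f)}\Bigr)\ll G^{-1/20}.
\]
Here $f$ runs over squarefree $y$-rough integers. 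The inner sum is $\ll y^{-1/2}$, so the exponential is $O(1)$ once $y=(\log_2X)^C$ with $C\ge 3$.
\end{itemize}
With these changes your choice of $y$ works, and the lemma follows with, say, $\rho=\min(\delta/5,1/20)/2$.

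One more small point: the bound $\sum_{p\equiv -1\,(q)}1/p\ll \log_2 X/q$ should come from Brun--Titchmarsh over the whole range. Counting the primes $p\le q^2$ trivially gives only $(\log q)/q$.
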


\begin{lemma}\label{lem:technical-A}
Fix $\kappa>0$. Let $x$ be large, and set $X:=x\log x$ and $y:=\log x$. Uniformly for $1\le Y\le X$, we have
\[
 \#\{b\le Y:b\text{ squarefree},\ 
 \log(\sigma(b)_{\le y})+(\log_2 x)\Omega(\sigma(b)_{>y})>\kappa\log x\}
 \le YL(x)^{-\kappa+o(1)}.
\]
Here $o(1)$ denotes a quantity tending to $0$ as $x\to\infty$.
\end{lemma}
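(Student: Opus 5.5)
The plan is a Rankin-type argument. I would convert the condition in Lemma~\ref{lem:technical-A} into a weighted moment bound, and then use the fact that $\sigma$ is multiplicative on squarefree $b$. Put $z:=\log_2 x$ and $\lambda:=\log_3 x/\log_2 x$, so that $e^{\lambda\kappa\log x}=L(x)^{\kappa}$ and $(\log x)^{\lambda}=z$. For $n\ge1$ define the completely multiplicative weight
\[
 g(n):=(n_{\le y})^{\lambda}\, z^{\Omega(n_{>y})}=\exp\bigl(\lambda\bigl(\log(n_{\le y})+(\log_2 x)\,\Omega(n_{>y})\bigr)\bigr).
\]
Every $b$ in the set to be counted satisfies $g(\sigma(b))>L(x)^{\kappa}$. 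Hence that set has size at most $L(x)^{-\kappa}S(Y)$, where $S(Y):=\sum_{b\le Y,\ b\text{ squarefree}} g(\sigma(b))$. It therefore suffices to prove $S(Y)\le Y L(x)^{o(1)}$ uniformly for $1\le Y\le X$. This target is reasonable because $L(x)^{o(1)}$ absorbs any factor $\exp(O((\log_2 x)^{C}))$. The reason is that $\log L(x)=\log x\,\log_3 x/\log_2 x$, which dwarfs every fixed power of $\log_2 x$.

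For squarefree $b=p_1\cdots p_k$ we have $g(\sigma(b))=\prod_i g(p_i+1)$, so $f(b):=g(\sigma(b))$ is multiplicative on squarefree integers.

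The first ingredient is a prime-level estimate for $T(t):=\sum_{p\le t} g(p+1)\log p$. Write $p+1=s\,r$ with $s=(p+1)_{\le y}$ and $r=(p+1)_{>y}$, and group the primes by $s$ and by $j=\Omega(r)$. Brun--Titchmarsh together with a Hardy--Ramanujan type bound for the number of prime factors of the rough part of a shifted prime gives
\[
 \#\{p\le t: s\mid p+1,\ \Omega(r)=j\}\ll \frac{t}{s\log(t/s)}\frac{(\log_2 t+O(1))^{j}}{j!}.
\]
Summing against $s^{\lambda}z^{j}$ then yields
\[
 T(t)\ll t\,\prod_{q\le y}(1-q^{\lambda-1})^{-1}\,e^{z\log_2 X}\le t\,M,\qquad M:=\exp(O((\log_2 x)^2)).
\]
Here $q^{\lambda}\le y^{\lambda}=z$, which makes the smooth Euler factor $\exp(O(\log_2 x\log_3 x))$.

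The second step passes from primes to all squarefree $b\le Y$, and this is where I expect the main obstacle. The naive route is to bound $S(Y)\le Y\prod_{p\le Y}(1+f(p)/p)$, or to run the Hildebrand-style recursion $S(Y)\log Y\le\sum_{a\le Y}f(a)\,T(Y/a)$. Both are far too lossy, because the average of $f$ over primes is itself of size $M$, and iterating multiplies these losses. What I would try instead exploits the structure specific to our situation. First, $\Omega(\sigma(b)_{>y})\le \log(2X)/\log y$. Second, the quantity that matters is $\Omega$ of the whole of $\sigma(b)_{>y}$, not of each $p_i+1$ separately. So I would split $b=a\,P$ with $P=P^{+}(b)$ and distinguish two cases.

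\emph{Case $P>Y^{1/\log_2 x}$.} Here I would bound the contribution of $P$ by $T$, and that of $a$ crudely by $\sum_{a\le Y/P} f(a)\ll (Y/P)\,M'$.

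\emph{Case $P\le Y^{1/\log_2 x}$.} Here $b$ is $Y^{1/\log_2 x}$-smooth. By de Bruijn-type estimates such $b$ number at most $Y\exp(-(1+o(1))\log_2 x\log_3 x)$, and I would combine this with a trivial bound on $f$ on this sparse set.

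If this splitting still loses too much, the fallback is to decouple the two pieces of the exponent. Cover the event by a grid of $O(\log x)$ events, indexed by $\theta$:
\[
 \sigma(b)_{\le y}\ge x^{\theta\kappa}\quad\text{and}\quad \Omega(\sigma(b)_{>y})\ge (1-\theta)\kappa\log x/\log_2 x .
\]
The smooth part is handled by Rankin with exponent $\lambda$ on $\sigma(b)_{\le y}$ alone, which only needs Euler factors over $q\le y$. The rough part is handled by the bound
\[
 \#\{b\le Y:\Omega(\sigma(b)_{>y})\ge k\}\le Y\,(C(\log_2 x)^2/k)^{k},
\]
which for $k\asymp\log x/\log_2 x$ already gives $L(x)^{-(1-\theta)\kappa+o(1)}$. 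The delicate point is making these two estimates hold jointly rather than separately, and I would handle it by applying Rankin to the smooth part inside the $k$-fold convolution over the primes of $b$.
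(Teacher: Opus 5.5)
Your opening reduction is fine. With $g=H^{\lambda}$, where $H(v)=v_{\le y}\,y^{\Omega(v_{>y})}$, the set has size at most $L(x)^{-\kappa}S(Y)$, and the target $S(Y)\le YL(x)^{o(1)}$ is in fact true. The gap is that you never prove it, and the routes you sketch fail as stated:
\begin{itemize}
\item In the case $P\le Y^{1/\log_2 x}$, the only trivial bound available is $f(b)\le\sigma(b)^{\lambda}=L(x)^{1+o(1)}$, because $(\log_2 x)\,\Omega(\sigma(b)_{>y})\le\log\sigma(b)_{>y}$. The de Bruijn saving $\exp\{-(1+o(1))\log_2 x\log_3 x\}$ is $L(x)^{o(1)}$, so it is negligible against this. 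This case alone therefore only gives $YL(x)^{1+o(1)}$.
\item The case $P>Y^{1/\log_2 x}$ presupposes $\sum_{a\le Y/P}f(a)\ll (Y/P)M'$, which is the statement being proved. Iterating it just brings the smooth case back at every level.
\item The fallback rests on the unproved bound $\#\{b\le Y:\Omega(\sigma(b)_{>y})\ge k\}\le Y(C(\log_2 x)^2/k)^k$. For $k\asymp\log x/\log_2 x$ this is a power saving $Yx^{-c+o(1)}$, far more than one should expect. Taking $b$ to be a product of primes $p$ for which $p+1$ is built from primes just above $y$ heuristically produces about $YL(x)^{-O(c)}$ such $b$. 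The joint smooth-and-rough estimate, which is the whole content of the lemma, is also left open.
\end{itemize}

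The missing idea is to keep the factor $(Y/b)^{\delta}$ in Rankin's trick, with $\delta=1/\log_2 x$, and pass to an Euler product over all integers:
\[
 S(Y)\le Y^{1+\delta}\sum_{b\text{ squarefree}}\frac{H(\sigma(b))^{\lambda}}{b^{1+\delta}}
 \le Y^{1+\delta}\exp\Bigl\{\sum_q\frac{H(q+1)^{\lambda}}{q^{1+\delta}}\Bigr\}
 \le Y^{1+\delta}\exp\Bigl\{3\sum_{v\ge1}\frac{H(v)^{\lambda}}{v^{1+\delta}}\Bigr\}.
\]
Here $Y^{\delta}\le\exp\{O(\log x/\log_2 x)\}=L(x)^{o(1)}$. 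Since $H$ is completely multiplicative, the last sum equals $\prod_{p\le y}(1-p^{\lambda-1-\delta})^{-1}\prod_{p>y}(1-y^{\lambda}p^{-1-\delta})^{-1}$. So no information about shifted primes (your Brun--Titchmarsh/Hardy--Ramanujan step) is needed. The damping $q^{-\delta}$ is exactly what suppresses the large primes that make your $\delta=0$ bound $Y\prod_{p\le Y}(1+f(p)/p)$ blow up.

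The paper also lowers the exponent to $\eta=\log A/\log_2 x$ with $A=\log_2 x/(\log_3 x)^2$. Then $y^{\eta}=A$, the crude estimate $\sum_p p^{-1-\delta}\le\log_3 x+O(1)$ already makes the last sum $(\log x)^{o(1)}$, and still $x^{-\kappa\eta}=L(x)^{-\kappa+o(1)}$. With your exact $\lambda$ (so $y^{\lambda}=\log_2 x$) the crude estimate is not enough. The argument still closes if you use $\sum_{p>y}p^{-1-\delta}\le 1/2$ for large $x$, which holds because $\delta\log y=1$. The last sum is then at most $(\log x)^{1/2+o(1)}=o(\log L(x))$.
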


\begin{proof}
We apply Rankin's trick. Put $H(v):=v_{\le y}y^{\Omega(v_{>y})}$. For any choice of $\eta,\delta>0$, the count of $b$ in question is at most
\begin{align*}
 Y^{1+\delta}x^{-\kappa\eta}
 \sum_{\substack{b\le Y\\b\text{ squarefree}}}
 \frac{H(\sigma(b))^\eta}{b^{1+\delta}}
 &\le Y^{1+\delta}x^{-\kappa\eta}
 \prod_q\left(1+\frac{H(q+1)^\eta}{q^{1+\delta}}\right)\\
 &\le Y^{1+\delta}x^{-\kappa\eta}
 \exp\left\{\sum_q\frac{H(q+1)^\eta}{q^{1+\delta}}\right\}.
\end{align*}
We let
\[
 A:=\frac{\log_2 x}{(\log_3 x)^2},\qquad
 \eta:=\frac{\log A}{\log_2 x},\qquad
 \delta:=\frac1{\log_2 x}.
\]
Then $Y^{\delta}\le X^{\delta}\le L(x)^{o(1)}$ and $x^{-\kappa\eta}=L(x)^{-\kappa+o(1)}$. So it suffices to show that the displayed sum on $q$ is $o(\log L(x))$. Crudely,
\[
 \sum_q\frac{H(q+1)^\eta}{q^{1+\delta}}
 \le 2^{1+\delta}\sum_q\frac{H(q+1)^\eta}{(q+1)^{1+\delta}}
 <3\sum_{v\ge1}\frac{H(v)^\eta}{v^{1+\delta}},
\]
where the final sum is over all positive integers $v$. Continuing, since $y^\eta=A$,
\begin{align*}
 \sum_{v\ge1}\frac{H(v)^\eta}{v^{1+\delta}}
 &=\prod_{p\le y}(1-p^{-1-\delta+\eta})^{-1}
   \prod_{p>y}(1-Ap^{-1-\delta})^{-1}\\
 &\le\exp\left\{\sum_{p\le y}p^{-1-\delta+\eta}
       +A\sum_{p>y}p^{-1-\delta}+O(1)\right\}\\
 &\le\exp\left\{A\sum_{p\le y}p^{-1}
       +A\sum_{p>y}p^{-1-\delta}+O(1)\right\}.
\end{align*}
Here the higher degree terms in the logarithmic expansion have been absorbed into the $O(1)$, using that $\eta,\delta=o(1)$ and $A^2/y=o(1)$.

Now, by Mertens' theorem \cite[Theorem 2.7(d), p.~50]{MontgomeryVaughan2007},
\[
 A\sum_{p\le y}p^{-1}\le(1+o(1))A\log_2 y
 =(1+o(1))\log_2 x/\log_3 x,
\]
while the Euler product and standard bounds for $\zeta(s)$ \cite[Corollaries 1.11 and 1.14, pp.~23, 25]{MontgomeryVaughan2007} give
\[
 \sum_{p>y}p^{-1-\delta}\le\sum_p p^{-1-\delta}
 \le\log\zeta(1+\delta)+O(1)
 \le\log\frac1\delta+O(1)=\log_3 x+O(1).
\]
Collecting estimates, we conclude that the above sum on $q$ is at most $\exp\{(2+o(1))\log_2 x/\log_3 x\}\allowbreak=(\log x)^{o(1)}$, which is comfortably $o(\log L(x))$.
\end{proof}

    The next proposition reduces the proof of Theorem \ref{thm:main} to handling pairs whose members have large, relatively prime squarefree divisors with controlled large prime factors. 
    The proof borrows several ideas from the preliminary reduction steps in Pomerance's paper \cite{Pomerance2015}; see the start of \S3 there (compare also \S2 of \cite{Pomerance1981}). 

    \begin{proposition}\label{prop:structural}
    Let $x$ be large. Apart from at most \(xL(x)^{-1/2+o(1)}\) exceptions, every amicable pair \(\{n,n'\}\) with at least one member not exceeding \(x\) admits, after possibly interchanging \(n,n'\), a factorization
    \[
     n=Ba,\qquad n'=B'a',
    \]
    where
    \[
    \begin{gathered}
     B,B'\le L(x)^{O(1)},\qquad a,a'=x/L(x)^{O(1)},\\
     a,a'\text{ squarefree},\\
     \gcd(a,B)=1,\qquad \gcd(a',B')=1,\\
     \gcd(a,n')=\gcd(a',n)=1,\\
     P^{+}(a')<P^{+}(a)\le x^{4/5}.
    \end{gathered}
    \]
    \end{proposition}
   Below, ``negligible exceptional set'' always means a set of size at most $xL(x)^{-1/2+o(1)}$.
    
    \begin{proof}
    Throughout the proof, we assume that $x$ is sufficiently large. We put
    \[
     X:=x\log x,\quad Y:=L(x)^{1/2}.
    \]
    At least one member of the pair $n, n'$ does not exceed $x$. As noted in the introduction, both $n,n'\le X$. Let $\rho:=\rho(1/2)$ be the constant of Lemma~\ref{lem:pollack} corresponding to $\beta=1/2$. Then we may assume that
    \[
     (n,n')\le Y^{1/\rho}.
    \]
    Indeed, $(n,n')=(n,\sigma(n)-n)=(n,\sigma(n))$, while Lemma~\ref{lem:pollack} with $G:=Y^{1/\rho}$ bounds the number of $n\le X$ with $(n,n')>Y^{1/\rho}$ by $XY^{-1}$. This is $xL(x)^{-1/2+o(1)}$ and so negligible for us.
    
    We may also assume that $n,n'>XY^{-1}$ and that $n,n'$ both have squarefull part at most $Y^2$, since the count of exceptions in $[1,X]$ to either condition is $O(XY^{-1})$. (We use that the count of squarefull numbers up to $t$ is $O(t^{1/2})$ \cite[p.~101, (14)]{ErdosSzekeres1934}, then apply partial summation.) We have noted here that each member of an amicable pair determines the pair.
    
    Let $B$ be the product of all prime powers $q^e\parallel n$ with $e\ge 2$, along with all primes $q\parallel n$ that divide $n'$. Define $B'$ analogously. Then
    \[
     B,B'\le Y^2\cdot Y^{1/\rho}=L(x)^{O(1)}.
    \]
    Writing $n=Ba$ and $n'=B'a'$, we see that $a,a'$ are squarefree, $(a,B)=(a',B')=1$, and $(a,n')=(a',n)=1$. Moreover,
    \[
     X\ge a=n/B\ge\frac{X/Y}{L(x)^{O(1)}}=x/L(x)^{O(1)},
    \]
    so that $a=x/L(x)^{O(1)}$, with the same estimate holding for $a'$.
    
    Since $(a,n')=1$, we have $(a,a')=1$, and so $P^{+}(a)$ and $P^{+}(a')$ are distinct. Without loss of generality, we may assume that $P^{+}(a')<P^{+}(a)$. It remains to show that $P^{+}(a)\le x^{4/5}$, apart from a negligible exceptional set.
    
    Write $n=mp$ and $n'=m'p'$, where $p:=P^{+}(a)$ and $p':=P^{+}(a')$. The relations $n'=s(n)$ and $n=s(n')$ give 
    \begin{align*}
     m'p'&=ps(m)+\sigma(m),\\
     mp&=p's(m')+\sigma(m').
    \end{align*}
    We eliminate $p$ from the two equations: multiply the first equation by $m$, the second by $s(m)$, and then substitute the second into the first. We find that
    \[
     p'(mm'-s(m)s(m'))=m\sigma(m)+s(m)\sigma(m')>0,
    \]
    so that $m,m'$ determine $p'$, and hence also determine $n=pm$. Hence, the number of pairs where $mm'\le XY^{-1}$ is $O(XY^{-1}\log x)$, which is negligible for us. So we assume in what follows that $mm'>XY^{-1}$.
    
Suppose that $p>x^{4/5}$. Then
    \[
     m\le X/p\le x^{1/5+o(1)}.
    \]
    Furthermore, as $mm'>XY^{-1}$, we have that $m'\ge XY^{-1}m^{-1}\ge x^{4/5+o(1)}$, so that
    \[
     p'\le x^{1/5+o(1)}.
    \]
    Let $D$ be the smallest divisor of $a'$ exceeding $x^{1/2}$. Then
    \[
     x^{1/2}<D\le x^{7/10+o(1)}.
    \]
    Write $n'=DM$. Then $(D,M)=1$. Furthermore,
    \[
     (D,\sigma(D))\mid(D,\sigma(n'))=(D,n+n')=(D,n)\mid(a',n)=1,
    \]
    so that $(D,\sigma(D))=1$. We count the possible $n$ for fixed $D$ and $m$, then sum over $D,m$.
    
    The relations $n'=s(n)$ and $\sigma(n')=n+n'$ give
    \begin{align*}
     DM&=ps(m)+\sigma(m),\\
     \sigma(D)\sigma(M)&=pm+DM.
    \end{align*}
    Eliminating $p$, we find that
    \[
     s(m)\sigma(D)\sigma(M)=\sigma(m)DM-m\sigma(m).
    \]
    Looking modulo $\sigma(D)$,
    \[
     \sigma(m)DM\equiv m\sigma(m)\pmod{\sigma(D)}.
    \] 
    Since $(D,\sigma(D))=1$, the last congruence places $\sigma(m)M$ in a unique residue class modulo $\sigma(D)$, determined by $D,m$. Hence, if a solution exists, $M$ lies in a single residue class modulo $\sigma(D)/(\sigma(D),\sigma(m))$. Since $M\le X/D$, the number of possibilities for $n'=DM$ given $m,D$ is at most
    \[
     1+\frac{X/D}{\sigma(D)/(\sigma(D),\sigma(m))}
     \le1+\frac{X\sigma(m)}{D^2}.
    \]
    For large $x$, we have both $m\le x^{0.21}$ and $D\le x^{0.71}$ (say). Consequently, the number of $n$ that arise this way is at most
    \[
    \begin{aligned}
     \sum_{m\le x^{0.21}}\sum_{x^{1/2}<D\le x^{0.71}}
     \left(1+\frac{X\sigma(m)}{D^2}\right)
     &\ll x^{0.92}+X\cdot(x^{0.21})^2x^{-1/2}\\
     &\ll x^{0.92}\log x,
    \end{aligned}
    \]
    and this is again negligible. (We use here that
    $\sum_{m\le z}\sigma(m)\ll z^2$; see \cite[Theorem 324, p.~351]{HardyWright2008}.) \end{proof}

\Needspace{7\baselineskip}
We may now complete the proof of Theorem \ref{thm:main}.

\begin{proof}[Proof of Theorem \ref{thm:main}]
We continue to take $X:=x\log x$, so that both $n,n'\le X$. We discard the exceptional set of Proposition \ref{prop:structural} and assume $n=Ba$ and $n'=B'a'$ with $a,a', B, B'$ as described there. Fix $0<\varepsilon<1/10$. We will prove that
\[
 A(x)\le xL(x)^{-1/2+4\varepsilon+o(1)}, 
\]
as $x\to\infty$. Since $\varepsilon$ can be taken arbitrarily small, this proves Theorem \ref{thm:main}. 

Throughout this proof, constants and $o(1)$ terms may depend on $\varepsilon$.

Here is a brief roadmap of the proof. In each of the two cases below, we write one member of the pair as $mR$ and
choose a factor $v$ of the other member.\footnote{For readers
returning to this outline after reading the proof, $v=p$ in
Case I and $v=V$ in Case II.}
The choice is made so that $\sigma(m)$ and $\sigma(v)$ have
a large common divisor $d$. We count by fixing $m$ first.
There are few choices for $d$, and for each such choice,
the divisibility $d\mid\sigma(v)$ and the size restrictions
on $v$ limit the number of possible $v$. Finally, for fixed
$m,v$, the amicable relations allow at most two admissible
values of $R$.

\medskip
\Needspace{5\baselineskip}
\noindent\textbf{Case I: $P^{+}(a)>x^{\varepsilon^2}$.}

Put $p:=P^{+}(a)$. Then $p\parallel n$, and $p+1 \mid \sigma(n) = \sigma(n') = \sigma(B') \sigma(a')$. Hence, 
\[ r:= \frac{p+1}{g} \mid \sigma(a'), \quad\text{where we set}\quad g:=\gcd(p+1,\sigma(B')).\]
Since 
\[ r \mid \sigma(a') = \prod_{q \mid a'} (q+1), \]
we can factor
\[ r = \prod_{q \mid a'} e_q, \quad\text{where each}\quad e_q \mid q+1.\]
(Here we allow the possibility that some $e_q$ are equal to $1$.)

Let $r_0$ denote the product of those $q \mid a'$ for which $e_q \le q^{9/10}$. Then 
\[ r_0 = a' \Bigg(\prod_{\substack{q\mid a' \\ e_q > q^{9/10}}} q\Bigg)^{-1} \ge a' \Bigg(\prod_{q\mid a'} e_q^{10/9}\Bigg)^{-1} = a'/r^{10/9}.\]
Now $a'=x^{1+o(1)}$, while $r\le p+1\le x^{4/5}+1$. Hence,
\[ r_0\ge\frac{a'}{(p+1)^{10/9}}\ge x^{1/9+o(1)}>x^{1/10}.\]

Put $\alpha:=\varepsilon^2/10$. If there is a prime divisor of $r_0$ exceeding $x^\alpha$, we let $R = P^{+}(r_0)$. Since $R$ is a prime dividing $a'$, we then have
\[ x^{\alpha} < R < p.
\]
Otherwise, we let $R$ be the smallest divisor of $r_0$ exceeding $x^{\alpha}$ (which certainly exists by the above lower bound on $r_0$). In that case, 
\[ x^{\alpha} < R \le x^{2\alpha}.\]
Note that we always have $R \mid r_0 \mid a'\mid n'$, while 
\[ \prod_{q\mid R} e_q \le R^{9/10}.\]

Write $n'=mR$. Then $(m,R)=1$. Set
\[
 d:=\frac{p+1}{\prod_{q\mid R}e_q} = g 
 \prod_{q\mid a'/R} e_q.
\]
Since $g$ divides $\sigma(B')$ and $\prod_{q\mid a'/R} e_q$ divides $\prod_{q\mid a'/R} (q+1) = \sigma(a'/R)$,
we have that
\[ d \mid \sigma(B') \sigma(a'/R) = \sigma(n'/R) = \sigma(m). \]
Moreover,
\[ d > p/R^{9/10}. \]
We count possible values of $n'=mR$ by first fixing $m$, then counting corresponding values of $d,p$, and $R$. 

We say that \(R\) is of Type A if \(R\) is prime and \(R<p\), and of Type B if \(x^\alpha<R\le x^{2\alpha}\). Note that $R$ may be of both types.

For fixed $m,p$, there is at most one admissible $R$ of Type A. Indeed, $p\mid n=s(mR)$ gives \begin{equation}\label{eq:Rsmcong} Rs(m)+\sigma(m)\equiv0\pmod p.\end{equation} If an admissible solution exists, $(p,s(m))=1$; otherwise $p\mid\sigma(m)$ and hence $p\mid \sigma(m)-s(m)=m$, contrary to $(a,n')=1$. The congruence \eqref{eq:Rsmcong} then places the prime $R<p$ into a single residue class modulo $p$, determined by $m,p$.

There is also at most one admissible $R$ of Type B. We have $(R,\sigma(R))=1$. Indeed, 
\[ (R,\sigma(R)) \mid \sigma(m)\sigma(R)-mR = n, \quad (R,\sigma(R))\mid R \mid a', \]
and $\gcd(a',n)=1$. Moreover, $(p,mR)=1$, using that $a$ and $n'$ are relatively prime. The condition $p\mid n$ can be written as the congruence
\[
 \sigma(m)\sigma(R)\equiv mR\pmod p.
\]
If this congruence has an admissible solution, then $(p,\sigma(m))=1$, allowing us to rewrite it in the suggestive form
\[ \frac{\sigma(R)}{R} \equiv \frac{m}{\sigma(m)}\pmod{p}.\]
So if $R_1,R_2$ are two admissible solutions, then $\frac{\sigma(R_1)}{R_1} \equiv \frac{\sigma(R_2)}{R_2} \pmod{p}$, implying that 
\[
 \sigma(R_1)R_2-\sigma(R_2)R_1\equiv0\pmod p.
\]
But if $R_1, R_2$ are of Type B, the absolute value of the left-hand side is at most $x^{4\alpha+o(1)}<p$. Hence it vanishes, giving $\sigma(R_1)/R_1=\sigma(R_2)/R_2$. Since both fractions are reduced, $R_1=R_2$. (A similar idea appears in the proof of \cite[Theorem 3.3]{PollackSinghaRoy2022}.)

To finish up, we place $p,R$ in dyadic ranges $P\le p<2P$, $U\le R<2U$, where $P,U$ are powers of $2$ with $U > x^{\alpha}/2$. For each $m\le X/U$, there are at most $\tau(\sigma(m))=x^{o(1)}$ choices of $d$ \cite[Theorem 315, p.~343]{HardyWright2008}. Each admissible $d$ satisfies $d\ge P/(2U)^{9/10}$, and $p\equiv-1\pmod d$, so there are $O(U^{9/10})$ possible $p$ in the range. As we have just seen, each $m,p$ gives at most two possible $R$. The number of pairs in these ranges is therefore at most
\[
 \frac XU x^{o(1)}U^{9/10}
 \le x^{1-\alpha/10+o(1)}.
\]
There are $O((\log x)^2)$ ranges, so Case I saves a power of $x$.

\medskip
\Needspace{5\baselineskip}
\noindent\textbf{Case II: $P^{+}(a)\le x^{\varepsilon^2}$.}

Since $a$ is of size $x^{1+o(1)}$, the smallest divisor $R$ of $a$ with $R\ge x^{\varepsilon}$ in fact satisfies
\[
 x^\varepsilon\le R\le x^{\varepsilon+\varepsilon^2}.
\]
We write $a=Rb$, so that
\[
 n=RBb.
\]
By Lemma \ref{lem:technical-A}, we can assume that
\begin{equation}\label{eq:case-II-mass}
 \log(\sigma(b)_{\le y})+(\log_2 x)\Omega(\sigma(b)_{>y})
 \le (1/2-4\varepsilon)\log x,
 \quad \text{where~} y:=\log x.
\end{equation}
Indeed, given $b$ there are at most $X/b$ possibilities for $n$. Lemma \ref{lem:technical-A} with $\kappa:=1/2-4\varepsilon$ and partial summation show that the number of exceptions is at most
\[
 (X\log X)\,L(x)^{-1/2+4\varepsilon+o(1)}
 =xL(x)^{-1/2+4\varepsilon+o(1)}.
\]

Since $b$ is a unitary divisor of $n$, we have that $\sigma(b)\mid \sigma(n) = \sigma(n') = \sigma(a')\sigma(B')$. So if we set
\[
 r:=\frac{\sigma(b)_{>y}}{g}, \quad\text{where now}\quad g:= (\sigma(b)_{>y}, \sigma(B')),
\]
then $r$ divides both $\sigma(b)$ and $\sigma(a')$.

Our plan is to choose divisors $V\mid a'$ and $d\mid r$ with
$d\mid\sigma(V)$. Writing $m:=Bb$ (so that $n=mR$), we then have
$d\mid\sigma(m)$, so there are few choices for $d$ once
$m$ is fixed. The estimate \eqref{eq:reciprocal-b} below will allow us to count \(V\), provided \((V/d)(\log x)^{\Omega(d)}\) satisfies a suitable upper bound. We will also show that fixed $m,V$ determine at most one
admissible $R$. We proceed now to construct the required $V,d$.

We can factor
\[
 a'=V_0V_1\cdots V_t,\qquad
 V_0\le x^{2\varepsilon+3\varepsilon^2},\qquad
 x^{2\varepsilon+3\varepsilon^2}<V_i
 \le x^{2\varepsilon+4\varepsilon^2}\quad(1\le i\le t).
\]
To determine $V_1,\dots,V_t$, we successively multiply primes dividing $a'$ until their product first exceeds $x^{2\varepsilon+3\varepsilon^2}$, and repeat with the remaining primes as long as possible. We let $V_0:=a'/(V_1\cdots V_t)$. The upper bound for each $V_i$ follows from $P^+(a')<P^+(a)\le x^{\varepsilon^2}$.
Since $r\mid\sigma(a')$, there is a decomposition
\[
 r=d_0\cdots d_t, \quad\text{where each}\quad d_i\mid\sigma(V_i).
\]

With some malice aforethought, we look for an index $i\ge 1$ for which $(V_i/d_i) (\log{x})^{\Omega(d_i)}$ is small. Such an $i$ can be obtained by averaging. Notice that
\[
 \sum_{i=0}^t
 \{\log(V_i/d_i)+(\log_2 x)\Omega(d_i)\}
 =\log(a'/r)+(\log_2 x)\Omega(r).
\]
By the definition of $r$,
\[
\begin{aligned}
 \log(a'/r)
 &=\log\frac{a'}{\sigma(b)}
   +\log(\sigma(b)_{\le y})
   +\log{g}\\
 &\le\log(a'/b)+\log(\sigma(b)_{\le y})
   +\log\sigma(B').
\end{aligned}
\]
Also, $\Omega(r)\le\Omega(\sigma(b)_{>y})$. Thus
\eqref{eq:case-II-mass} gives
\[
 \log(a'/r)+(\log_2 x)\Omega(r)
 \le\log(a'/b)+\log\sigma(B')
    +(1/2-4\varepsilon)\log x.
\]
Now $\log\sigma(B')=o(\log x)$.  Furthermore, writing $b=a/R$, we see that
\[ \log(a'/b)=\log(a'/a)+\log R
 \le(\varepsilon+\varepsilon^2+o(1))\log x.
\]
(We use here that $a, a'$ are both of size $x^{1+o(1)}$, so that $a'/a= x^{o(1)}$.) 
Consequently,
\begin{equation}\label{eq:total-cost}
 \sum_{i=0}^t
 \{\log(V_i/d_i)+(\log_2 x)\Omega(d_i)\}
 \le(1/2-3\varepsilon+\varepsilon^2+o(1))\log x.
\end{equation}

Inequality \eqref{eq:total-cost} forces the existence of $i\ge1$ for which
\begin{equation}\label{eq:choice-dV}
 \log(V_i/d_i)+(\log_2 x)\Omega(d_i)
 \le(1/2-\varepsilon)\log V_i.
\end{equation}
Otherwise,
\begin{equation}
\begin{aligned}
 \sum_{i=1}^t
 \{\log(V_i/d_i)+(\log_2 x)\Omega(d_i)\}
 &>(1/2-\varepsilon)\sum_{i=1}^t\log V_i\\
 &=(1/2-\varepsilon)\log(a'/V_0)\\
 &\ge\{(1/2-\varepsilon)(1-2\varepsilon-3\varepsilon^2)
       +o(1)\}\log x.\label{eq:contradict1}
\end{aligned}
\end{equation}
On the other hand, the term for $i=0$ in \eqref{eq:total-cost}
is at least $-\log(\sigma(V_0)/V_0)$, which is at least $-\log_2{x}$ for large $x$. So we get from \eqref{eq:total-cost} that
\begin{equation}\label{eq:contradict2}
 \sum_{i=1}^t
 \{\log(V_i/d_i)+(\log_2 x)\Omega(d_i)\}
 \le(1/2-3\varepsilon+\varepsilon^2+o(1))\log x.
\end{equation}
But \eqref{eq:contradict1} and \eqref{eq:contradict2} are incompatible for large $x$, since
\[
 (1/2-\varepsilon)(1-2\varepsilon-3\varepsilon^2)
 >1/2-3\varepsilon+\varepsilon^2.
\]

Choose $i$ satisfying \eqref{eq:choice-dV} and put $d:=d_i$, $V:=V_i$, and $J:=\Omega(d)$.
Then $d>1$, $d\mid\sigma(b)$, $d\mid\sigma(V)$. Furthermore, \eqref{eq:choice-dV} gives
\begin{equation}\label{eq:dVstuff}
 \frac Vd \exp(J\log_2{x})\le x^{\varepsilon-4\varepsilon^3}.
\end{equation}
Here we used that $V \le x^{2\varepsilon + 4\varepsilon^2}$, while 
$(1/2-\varepsilon)(2\varepsilon+4\varepsilon^2)
=\varepsilon-4\varepsilon^3$.

Writing \(n=mR\), we have \(n'=s(n)=\sigma(m)\sigma(R)-mR\). As \(V\mid n'\),
\begin{equation}\label{eq:congruence-R}
 \sigma(m)\sigma(R)\equiv mR\pmod V.
\end{equation}

We first show that fixed $m,V$ determine at most one admissible
value of $R$, and hence at most one possibility for $n=mR$.
To see why, first observe that $(R,\sigma(R))=1$. This coprimality is proved exactly as in Case I: We have that $R\mid a$ and that $(R,\sigma(R)) \mid \sigma(m)\sigma(R)-mR = n'$, while $\gcd(a,n')=1$.
We also have that $(R,V)=1$: indeed, $R\mid n$
while $V\mid a'$ and $(n,a')=1$. Furthermore, if there is an
admissible solution $R$ to \eqref{eq:congruence-R}, then
$(\sigma(m),V)=1$, since
\[
 (\sigma(m),V)\mid(\sigma(m)\sigma(R),V)
 =(mR,V)=(n,V)\mid(n,a')=1.
\]
Hence, each admissible solution $R$ to \eqref{eq:congruence-R} satisfies
\[
 \frac{\sigma(R)}R\equiv\frac{m}{\sigma(m)}\pmod V.
\]
If $R_1,R_2$ are two admissible solutions, taking a difference and clearing denominators shows that
\[
 \sigma(R_1)R_2-\sigma(R_2)R_1\equiv0\pmod V.
\]
But $R_1, R_2 \le x^{\varepsilon+\varepsilon^2}$, so that
\[
 |\sigma(R_1)R_2-\sigma(R_2)R_1| \le x^{2\varepsilon+2\varepsilon^2+o(1)},
 \]
while $V>x^{2\varepsilon+3\varepsilon^2}$. It follows (as in Case I) that $R_1= R_2$.

We now fix $m$ and count the possible $d,V$. If an admissible
$R$ exists, then $mx^\varepsilon\le mR=n\le X$, and so
$m\le Xx^{-\varepsilon}$. Since $b$ is a unitary divisor of
$m=Bb$ and $d\mid\sigma(b)$, we have $d\mid\sigma(m)$.
Thus, there are at most $x^{o(1)}$ choices of $d$. Moreover,
\eqref{eq:case-II-mass} gives
\[
 J=\Omega(d)\le\Omega(\sigma(b)_{>y})
 < \frac{\log x}{\log_2 x}.
\]

For fixed $d$, we estimate $\sum 1/V$ over squarefree $V\le X$
with $d\mid\sigma(V)$. The following argument is a variant of the proof of
\cite[Lemma 2.4]{Pollack2011}
(see also \cite[Lemma 2.1]{LucaPollack2011}
and \cite[Lemma 7]{Pollack2019}). We use a standard consequence of the
Brun--Titchmarsh inequality \cite[Theorem 3.9, p.~90]{MontgomeryVaughan2007}: For every positive integer $w$ and real $X\ge3$,
\begin{equation}\label{eq:prime-reciprocals}
 \sum_{\substack{\ell\le X\\\ell\equiv-1\pmod w}}\frac1\ell
 \ll\frac{\log_2 X}{\phi(w)}.
\end{equation}

Write the prime factorization of $V$ in the form
$V=\ell_1\cdots\ell_k$, so that
$\sigma(V)=(\ell_1+1)\cdots(\ell_k+1)$. Since $d\mid\sigma(V)$,
we can write
\[
 d=f_1\cdots f_k,
\]
where each $f_i\mid\ell_i+1$. We can assume, after relabeling, that
\[
 f_1\ge\cdots\ge f_h>1=f_{h+1}=\cdots=f_k.
\]
Then $f_1f_2\cdots f_h$ is a multiplicative partition of $d$.
Fix such a partition $d=f_1\cdots f_h$.
Every corresponding $V$ has a representation
$V=\ell_1\cdots\ell_hu$, with $f_i\mid\ell_i+1$.
Thus, by \eqref{eq:prime-reciprocals},
\[
\begin{aligned}
 \sum\frac1V
 &\le\Bigg\{\sum_{u\le X}\frac1u\Bigg\}
 \prod_{i=1}^h
 \Bigg\{\sum_{\substack{\ell_i\le X\\
                       \ell_i\equiv-1\pmod{f_i}}}\frac1{\ell_i}\Bigg\}\\
 &\ll\frac{(\log X)(C\log_2 X)^h}
          {\phi(f_1)\cdots\phi(f_h)}\\
 &\le\frac{(\log X)(C\log_2 X)^h\exp\{2J/y\}}d,
\end{aligned}
\]
where $C$ is a positive absolute constant. To go from the second to the third line, we used that every prime factor of $d$ exceeds $y$, so that each $\phi(f_i) = f_i \prod_{p \mid f_i}(1-1/p) \ge f_i \exp(-2\sum_{p \mid f_i} 1/p) \ge f_i \exp(-2\Omega(f_i)/y)$.

We have $h\le\Omega(d)=J$, and the number of multiplicative
partitions of $d$ is at most $\Omega(d)^{\Omega(d)}=J^J$: Each partition is obtained in at least one way by partitioning the
$\Omega(d)$ prime factors of $d$, with occurrences
distinguished, into nonempty groups. The number of these set partitions is bounded by the number $J^J$ of functions from $\{1,2,\dots,J\}$ to $\{1,2,\dots,J\}$, as one sees by encoding each partition by the function sending an element to the least member of its group. Since $J\le\log x/\log_2 x$, we have
\[
 \log\{(\log X)(C\log_2 X)^J J^J\exp\{2J/y\}\}
 \le J\log_2 x+O(J+\log_2 x).
\]
Consequently,
\begin{equation}\label{eq:reciprocal-b}
 \sum_{\substack{V\le X\\V\text{ squarefree}\\d\mid\sigma(V)}}\frac1V
 \le\frac{\exp\{J\log_2 x+O(J+\log_2 x)\}}d.
\end{equation}

By \eqref{eq:dVstuff}, all the admissible $V$ are at most
$d\,x^{\varepsilon-4\varepsilon^3}\exp\{-J\log_2 x\}$. Thus, by \eqref{eq:reciprocal-b}, their number is at most
\[ \sum_{V} \frac{d\,x^{\varepsilon-4\varepsilon^3}\exp\{-J\log_2 x\}}{V} \le x^{\varepsilon-4\varepsilon^3+o(1)}.
\]
Here the sum is over admissible \(V\), and we have used \(J+\log_2 x=o(\log x)\).

Each $m,V$ gives at most one admissible $R$, and hence at most
one $n$. For fixed $m,d$, we have just seen that the number of corresponding $V$ is bounded by $x^{\varepsilon-4\varepsilon^3+o(1)}$.
Thus, it remains to sum over $m\le Xx^{-\varepsilon}$ and
$d\mid\sigma(m)$. For each $m$, there are at most
$\tau(\sigma(m))=x^{o(1)}$ choices of $d$, uniformly in this range.
Consequently, the number of nonexceptional $n$ in Case II is at most
\[
 Xx^{-\varepsilon}\cdot x^{o(1)}
 \cdot x^{\varepsilon-4\varepsilon^3+o(1)}
 =x^{1-4\varepsilon^3+o(1)},
\]
which is negligible.

Finally, each of our amicable pairs contributes at most two
amicable numbers to $A(x)$, so
$A(x)\le xL(x)^{-1/2+4\varepsilon+o(1)}$.
Since this holds for every fixed $0<\varepsilon<1/10$,
the theorem follows.
\end{proof}


\begin{thebibliography}{99}
\providecommand{\bysame}{\leavevmode\hbox to3em{\hrulefill}\thinspace}

\bibitem{ChernykhDatabase}
S. Chernykh,
\href{https://sech.me/ap/}{\emph{Amicable numbers list}},
online database, \url{https://sech.me/ap/},
accessed September 29, 2026.

\bibitem{Erdos1935}
P. Erd\H{o}s,
\href{https://www.renyi.hu/~p_erdos/1935-08.pdf}{\emph{On the normal number of prime factors of $p-1$ and some related problems concerning Euler's $\phi$-function}},
Quart. J. Math. Oxford Ser. \textbf{6} (1935), 205--213.

\bibitem{Erdos1955}
\bysame,
\href{https://www.renyi.hu/~p_erdos/1955-03.pdf}{\emph{On amicable numbers}},
Publ. Math. Debrecen \textbf{4} (1955), 108--111.

\bibitem{ErdosRieger1975}
P. Erd\H{o}s and G.~J. Rieger,
\href{https://www.renyi.hu/~p_erdos/1975-10.pdf}{\emph{Ein Nachtrag \"uber befreundete Zahlen}},
J. reine angew. Math. \textbf{273} (1975), 220.

\bibitem{ErdosSzekeres1934}
P. Erd\H{o}s and G. Szekeres,
\href{https://www.renyi.hu/~p_erdos/1934-05.pdf}{\emph{\"Uber die Anzahl der Abelschen Gruppen gegebener Ordnung und \"uber ein verwandtes zahlentheoretisches Problem}},
Acta Sci. Math. (Szeged) \textbf{7} (1934), 95--102.

\bibitem{HardyWright2008}
G.~H. Hardy and E.~M. Wright,
\href{https://academic.oup.com/book/54489}{\emph{An introduction to the theory of numbers}},
6th ed., Oxford University Press, Oxford, 2008.

\bibitem{Iamblichus1894}
Iamblichus,
\href{https://books.google.com/books?id=ItofAAAAMAAJ}{\emph{In Nicomachi arithmeticam introductionem liber}},
E. Pistelli (ed.), B.~G. Teubner, Leipzig, 1894.

\bibitem{Kanold1954}
H.-J. Kanold,
\href{https://doi.org/10.1007/BF01181341}{\emph{\"Uber die Dichten der Mengen der vollkommenen und der befreundeten Zahlen}},
Math. Z. \textbf{61} (1954), 180--185.

\bibitem{LucaPollack2011}
F. Luca and P. Pollack,
\href{https://doi.org/10.5802/jtnb.783}{\emph{An arithmetic function arising from Carmichael's conjecture}},
J. Th\'eor. Nombres Bordeaux \textbf{23} (2011), 697--714.

\bibitem{MontgomeryVaughan2007}
H.~L. Montgomery and R.~C. Vaughan,
\href{https://websites.umich.edu/~hlm/mnt1.html}{\emph{Multiplicative number theory I. Classical theory}},
Cambridge Studies in Advanced Mathematics, vol.~97,
Cambridge University Press, Cambridge, 2007.

\bibitem{Pollack2011}
P. Pollack,
\href{https://www.pollack-math.net/combined3.pdf}{\emph{On the greatest common divisor of a number and its sum of divisors}},
Michigan Math. J. \textbf{60} (2011), 199--214.

\bibitem{Pollack2019}
\bysame,
\href{https://www.pollack-math.net/phicollapse8.pdf}{\emph{Popular subsets for Euler's $\phi$-function}},
Math. Ann. \textbf{374} (2019), 253--271.

\bibitem{Pollack2021}
\bysame,
\href{https://doi.org/10.1007/s00209-021-02762-2}
{\emph{The distribution of numbers with many factorizations}},
Math. Z. \textbf{299} (2021), 2327--2339.

\bibitem{PollackSinghaRoy2022}
P. Pollack and A. Singha Roy,
\href{https://doi.org/10.4064/cm8616-10-2021}{\emph{Powerfree sums of proper divisors}},
Colloq. Math. \textbf{168} (2022), 287--295.

\bibitem{Pomerance1977}
C. Pomerance,
\href{https://math.dartmouth.edu/~carlp/Amicable1.pdf}{\emph{On the distribution of amicable numbers}},
J. reine angew. Math. \textbf{293/294} (1977), 217--222.

\bibitem{Pomerance1980}
\bysame,
\href{https://math.dartmouth.edu/~carlp/popular.pdf}
{\emph{Popular values of Euler's function}},
Mathematika \textbf{27} (1980), 84--89.

\bibitem{Pomerance1981}
\bysame,
\href{https://math.dartmouth.edu/~carlp/Amicable2.pdf}{\emph{On the distribution of amicable numbers. II}},
J. reine angew. Math. \textbf{325} (1981), 183--188.

\bibitem{Pomerance2015}
\bysame,
\href{https://math.dartmouth.edu/~carlp/amicablesv3.pdf}{\emph{On amicable numbers}},
Analytic Number Theory, Springer, Cham, 2015, pp.~321--327.

\bibitem{Rieger1973}
G.~J. Rieger,
\href{https://doi.org/10.1515/crll.1973.261.157}{\emph{Bemerkung zu einem Ergebnis von Erd\H{o}s \"uber befreundete Zahlen}},
J. reine angew. Math. \textbf{261} (1973), 157--163.

\end{thebibliography}
    \end{document}